\newlength{\hchng}
\newlength{\vchng}
\newtheorem{theorem}{Theorem}[section]
\newtheorem{corollary}[theorem]{Corollary}
\newtheorem{lemma}[theorem]{Lemma}
\newtheorem{definition}[theorem]{Definition}
\newtheorem{preremark}[theorem]{Remark}
\newtheorem{claim}[theorem]{Claim}
\newenvironment{remark}{\begin{preremark}\rm}{\medskip \end{preremark}}
\numberwithin{equation}{section}
\newcommand{\R}{\mathbb R}
\DeclareMathOperator{\reg}{reg}
\DeclareMathOperator{\divg}{div}
\begin{document}

\title{Streamlines concentration and application to the incompressible Navier-Stokes equations}
\author{Eric Foxall, Slim Ibrahim  and Tsuyoshi Yoneda}
\email{E. Foxall: e.t.foxall@gmail.com}
\email{S. Ibrahim: ibrahim@math.uvic.ca} \urladdr{
http://www.math.uvic.ca/~ibrahim/}
\email{T. Yoneda: yoneda@math.sci.hokudai.ac.jp}

\thanks{S. I. is partially supported by NSERC\# 371637-2009 grant and a start up fund from University
of Victoria}
\thanks{T. Y. is partially supported by PIMS Post-doc fellowship at the University
of Victoria, and partially supported by NSERC\# 371637-2009}
\maketitle
\begin{center}
Department of Mathematics and Statistics,
University of Victoria \\
PO Box 3060 STN CSC,
Victoria, BC, Canada, V8W 3R4 \\
\end{center}
\begin{center}
Department of Mathematics and Statistics,
University of Victoria \\
PO Box 3060 STN CSC,
Victoria, BC, Canada, V8W 3R4 \\
\end{center}
\begin{center}
Department of Mathematics, Hokkaido University\\ 
Sapporo 060-0810, Japan
\end{center}
\bibliographystyle{plain}

\vskip0.3cm \noindent {\bf Keywords: Navier-Stokes equations, Streamline, Stokes Theorem}

\vskip0.3cm \noindent {\bf Mathematics Subject Classification: 35Q30, 76D05, 76M99}

    \begin{abstract}
For a smooth domain $D$ containing the origin, we consider a vector field $u \in C^1(D\setminus\{0\},\mathbb{R}^3)$ with $\divg u \equiv 0$ and exclude certain types of possible isolated singularities at the origin, based on the geometry of streamlines that go near that possible singular point.
\end{abstract}

\section{Introduction}
In this paper  we consider  divergence-free smooth vector fields $u \in C^1(D\setminus\{0\},\mathbb{R}^3)$ defined on a domain $D$ of $\R^3$ containing the origin which may have a singular point at the origin.
We give a definition based on streamline concentration towards the eventual singularity, and we show that if there is sufficient streamline concentration, then the vector field cannot be an $L^2$ function\footnote{we define this situation precisely in the next section}. Therefore, this result rules out a certain geometric situation (streamline concentration) at a possible singular time for incompressible fluid equations such as the 3D Navier-Stokes equations. Before going any further, let us briefly recall a few results about the 3D Navier-Stokes equations on $\mathbb{R}^3$. The equations ruling the flow of an incompressible viscous fluid on $\R^3$ are
\begin{equation}\label{NS}
\begin{cases}
\partial_{t} v -\triangle v + \mbox{div} (v\otimes v) + \nabla p  = 0,  \\
\mbox{div} (v)  = 0,\quad v|_{t=0}=v_0

\end{cases}
\end{equation}
in which

$v$ is a vector-valued function representing the velocity of
the fluid, and $p$ is the pressure.
The  initial  value problem of the above
equation is endowed with the condition that $v(0, \cdot ) = v_{0}
\in L^2(\mathbb{R}^3)$.

A finite energy {\it weak solution} to the Navier-Stokes equations \eqref{NS} over a time interval $(0,T)$ is a pair $(v,p)$  satisfying

\begin{enumerate}
\item equation \eqref{NS} in the distributional sense,
\item $(v,p)\in L^\infty([0,T],L^2)\cap L^2([0,T],\dot H^1)\times L^{\frac53}_{loc}((0,T)\times\R^3)$
\item the energy inequality, for $0<t<T$
\begin{eqnarray}
\label{EnergyIne}
\|v(t,\cdot)\|_{L^2}^2+2\int_0^t\|\nabla v(t',\cdot)\|_{L^2}^2\;dt'\leq\|v(0,\cdot)\|_{L^2}^2.
\end{eqnarray}
\end{enumerate}
For a divergence free initial data $v_0\in (L^2(\R^3))^3$, the existence of global in time and finite energy  {\it weak solutions} to the Navier-Stokes equations is due to the pioneer works of J. Leray \cite{Leray} in the case $D=\R^3$ and E. Hopf \cite{Hopf} in the case of the torus. Moreover, neither the uniqueness nor the global regularity are known. These questions are the outstanding problems of regularity for solutions to the Navier-Stokes equations. Recall that the space-time singular set $S(u)$ of $u$ is defined as follows.

\begin{definition}
A point $(x_0,t_0)\notin S(u)$ if there exists a parabolic cylinder $Q_{(x_0,t_0)}(r):=\{|x-x_0|<r\}\times(t_0-r^2,t_0)$ about $(x_0,t_0)$ such that the solution $u\in L^\infty(Q_{(x_0,t_0)}(r))$.
\end{definition}
Modern regularity theory for solutions to equation \eqref{NS} began with the works of Prodi \cite{Prodi}, Serrin \cite{Serrin}, Ladyzhenskaya \cite{La}  implying that if
$$
u \in L^p_t(L^q_x)(Q_{(x_0,t_0)}(r)), \quad\mbox{for}\quad \frac3q+\frac2p<1,$$
then $\partial^k_xu\in\mathcal C^\alpha((Q_{(x_0,t_0)}(r/2)))$ for some $0<\alpha<1$ and therefore $u$ is regular. Later on, M. Struwe \cite{Struwe} extended this to the case (of scaling invariant pair)
i.e. $\frac3q+\frac2p=1$, and recently this was extended  to the limit case $u \in L^{\infty}_t(L^{3}_x)$ by L. Escauriaza, G. Seregin, and V. Sverak (see their famous work \cite{Esca}).
After the appearance of the Prodi-Serrin-Ladyzhenskaya criterion, many different regularity cirteria and Liouville type theorem of solutions to \eqref{NS} were established (see  \cite{Beale}, \cite{Veiga},  \cite{CSTY} and \cite{KNSS}).

We would like to mention a  regularity criterion in \cite{Vasseur2} by A. Vasseur (see also  \cite{Chan}).
He gave a regularity criterion for solutions $u$ to \eqref{NS} in terms of the
integral condition $\text{div}(\frac{u}{|u|}) \in L^{p}(0,\infty ;L^{q}(\mathbb{R}^{3}))$ with $\frac{2}{p}+ \frac{3}{q} \leqslant \frac{1}{2}$
imposed on the scalar quantity $F = \text{div} (\frac{u}{|u|})$. Note that the case $(p,q)=(6,\infty)$ is included.

Concerning the analysis of the singular set $S(u)$, we recall the following facts: First, by definition, the set $S(u)$ is closed, and thanks to the result of C. Foias and R. Temam \cite{Foias}, the $\frac12$-dimensional Hausdorff measure of the set of singular times $\tau(u):=\mbox{proj}_tS(u)$\footnote{the map $(x,t)\mapsto t$} is zero. Next, V. Scheffer \cite{Scheffer} and then L. Caffarelli, R. Kohn and L. Nirenberg \cite{CKN} showed the best result concerning {\it partial regularity} of {\it suitable weak} solutions\footnote{roughly, these are weak solutions satisfying the local energy inequality instead of the global one \eqref{EnergyIne}.} of the Navier-Stokes equations stating that the parabolic one-dimensional Hausdorff measure of $S(u)$ is zero. Finally, a consequence of the latter result  is a bound on the spatial singular set for each time slice  $S_T := S(u) \cap \{t = T \}$  which has at most one-dimensional Hausdorff measure.

In this paper, we focus on the vector field at a possible singular time $T\in\tau(u)$, and examine the geometry of its streamlines.  Recall that in \cite{CY}, C-H. Chan and the third author  proposed a possible scenario for an isolated space singularity at a possible blow-up time by using the energy inequality and regularity criterions especially \cite{Esca} and \cite{Vasseur2}. They constructed a divergence free velocity field $u$ within a {\it streamtube} segment
with increasing twisting (i.e., increasing swirl).

The construction of such a vector field $u$
demonstrates the way in which \emph{excessive} increase of twisting of streamlines
 can result in the \emph{blow up} of
the quantities $\|u\|_{L^{\alpha}(\mathbb{R}^{3})}$ (for some $2 < \alpha < 3$) and $\|\text{div} (\frac{u}{|u|})\|_{L^{6}(\mathbb{R}^{3})}$ while at the same time preserving
the finite energy property $u \in L^{2}(\mathbb{R}^{3})$ of the fluid.
Note that the increasing swirl streamtube is not included in the sufficient concentration streamlines  case. The device of streamtube has already proposed as the vortex-tube (see\cite{CM}).\\
In this work, we show that if ``enough" streamlines of a smooth and divergence free vector field concentrate towards a possible isolated singular point,\footnote{note that such singular set has a zero one-dimensional Hausdorff measure.} then the vector field cannot be an $L^2$ function. The main idea is to costruct an appropriate ``streamline flux tube" and apply Stokes' Theorem.

\section{A classification of divergence vector fields}

\begin{definition} (Streamline)
Let $D$ be a smooth domain containing the origin and $u:D\setminus\{0\}\to\mathbb{R}^3$ be a smooth vector field.
For a starting point $\eta\in D$, we define a  streamline $\gamma_\eta(s):[0,\infty)\to \mathbb{R}^3$ as the curve solving
\begin{equation}\label{ODE}
\partial_s\gamma_\eta(s)=u(\gamma_\eta(s))\quad\text{for}\quad s>0\quad\text{with}\quad \gamma_\eta(0)=\eta.
\end{equation}
\end{definition}
One may assume that streamlines are global, because otherwise, they go towards the possible singular point at  the origin.

The following definition is the key to classify the divergence-free vector field with a possible isolated singularity at the origin. Let $B_\alpha$ be the open ball with radius $\alpha$ centered at the origin.

\begin{definition}
  For  $\alpha>r$  let
\begin{equation*}
A_r^\alpha = \{\eta \in \partial B_{\alpha}: \gamma_\eta(s)\in
B_{r} \textrm{ for some }s>0,\;\mbox{and}\;
\gamma_\eta(s')\in B_\alpha\ {\text{for}\ 0<s'<s}\}.
\end{equation*}
\end{definition}
The above definition excludes the streamlines entering the ball $B_\alpha$ infinitely many times before entering $B_r$. If it happens and a streamline enters $B_\alpha$ finitely many times before getting into $B_r$, then one can re-parametrize the time so that its last entrance occurs at time $s=0$.

\begin{remark}
For streamlines from $A^\alpha_r$ we have the following properties
\begin{itemize}
\item
$|A^\alpha_r|$ is monotone decreasing with respect to $\alpha$ and increasing with respect to $r$. Indeed,
\begin{equation*}
 |A^\alpha_r|\geq |A^\alpha_{r'}|\quad\text{for}\quad r>r',\quad
 |A^\alpha_r|\geq |A^{\alpha'}_{r}|\quad\text{for}\quad \alpha<\alpha'.
 \end{equation*}
\item
Without loss of generality, we can assume that streamlines from $A^\alpha_r$ are globally defined.
\item
From definition of $A^\alpha_r$ we cannot have stagnation points of the fluid  (i.e. $u(\gamma_\eta(s))=0$ for some $s>0$).
\end{itemize}
\end{remark}

\begin{definition} (Stream-surface \& flux-tube)
Let $D \subset \mathbb{R}^3$ be a surface and $s$ be such that $\gamma_{\eta}(s)$ is defined for each $\eta \in D$.
\begin{itemize}
\item A {\it stream-surface} $S^D(s)$ is defined as $S^D(s) = \bigcup_{\eta \in D} \gamma_{\eta}(s)$.
\item A {\it flux-tube} $T^D(s)$ is given by $T^D(s) = \bigcup_{0 \leq s' \leq s}S^D(s')$.
\item The {\it mantle} of the  {\it flux-tube} $T^D(s)$ is $\partial T^D(s)$.
\end{itemize}
\end{definition}

For $|x| \neq 0$ denote by $\hat{n}(x) = x/|x|$.  Smoothness and membership in $C^1$ are used interchangeably. The main result reads as follows.

\begin{theorem}
\label{MainThm} 
If for some $\alpha>0$ and for some $C>0$ independent of $r$, $|\int_{A_r^{\alpha}} u\cdot\hat{n}d\sigma| \geq Cr^{1/2}$ as $r \rightarrow 0$, then $u \notin L^2(\mathbb{R}^3)$.
\end{theorem}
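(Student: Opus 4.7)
The plan is to use the divergence-free condition together with Stokes' / divergence theorem to transfer the flux lower bound from the sphere $\partial B_\alpha$ onto $\partial B_r$, and then to extract an $L^2$ blow-up on a shrinking family of spheres via Cauchy--Schwarz and the coarea-type identity $\int_{B_\alpha}|u|^2 dx = \int_0^\alpha \int_{\partial B_r}|u|^2 d\sigma\, dr$.

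First I would build the flux tube. For each $\eta \in A_r^\alpha$, let $\tau(\eta)>0$ be the first time the streamline $\gamma_\eta$ hits $\partial B_r$; by definition of $A_r^\alpha$, the piece of streamline $\{\gamma_\eta(s') : 0 \le s' \le \tau(\eta)\}$ lies in the closed shell $\overline{B_\alpha}\setminus B_r$, which avoids the origin. Define
\[
\mathcal{T}_r = \bigcup_{\eta \in A_r^\alpha} \{\gamma_\eta(s') : 0 \le s' \le \tau(\eta)\}, \qquad E_r = \{\gamma_\eta(\tau(\eta)) : \eta \in A_r^\alpha\} \subset \partial B_r.
\]
The boundary of $\mathcal{T}_r$ decomposes into three pieces: the entry cap $A_r^\alpha \subset \partial B_\alpha$, the exit cap $E_r \subset \partial B_r$, and the mantle, which is swept out by streamlines. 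On the mantle $u$ is tangent to the surface, so $u \cdot n = 0$ there. Because $\divg u = 0$ on $\mathcal{T}_r$, the divergence theorem yields
\[
\int_{A_r^\alpha} u \cdot \hat{n}\, d\sigma + \int_{E_r} u \cdot \hat{n}_{r}\, d\sigma = 0,
\]
where $\hat{n}_r = x/|x|$ is the outward normal of $B_r$ (the signs work out because $\hat{n}$ is the outward normal of $B_\alpha$, which is outward for $\mathcal{T}_r$ at $A_r^\alpha$, while $\hat{n}_r$ is the inward normal of $\mathcal{T}_r$ at $E_r$, contributing with the opposite sign). In particular, by hypothesis, $\bigl|\int_{E_r} u \cdot \hat{n}_r\, d\sigma\bigr| \ge C r^{1/2}$ for all sufficiently small $r$.

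Next I apply Cauchy--Schwarz on the sphere $\partial B_r$:
\[
C r^{1/2} \le \Bigl|\int_{E_r} u \cdot \hat{n}_r\, d\sigma\Bigr| \le |E_r|^{1/2} \Bigl(\int_{\partial B_r} |u|^2\, d\sigma\Bigr)^{1/2} \le (4\pi)^{1/2} r \Bigl(\int_{\partial B_r} |u|^2\, d\sigma\Bigr)^{1/2},
\]
so that $\int_{\partial B_r} |u|^2\, d\sigma \ge \dfrac{C^2}{4\pi r}$ for all sufficiently small $r$. Integrating in $r \in (0,r_0)$ by the spherical coarea formula,
\[
\int_{B_{r_0}\setminus\{0\}} |u|^2\, dx = \int_0^{r_0} \int_{\partial B_r} |u|^2\, d\sigma\, dr \ge \int_0^{r_0} \frac{C^2}{4\pi r}\, dr = +\infty,
\]
which forces $u \notin L^2(\mathbb{R}^3)$.

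The main technical obstacle is making the divergence theorem step rigorous, since $A_r^\alpha$ is only a measurable subset of $\partial B_\alpha$ and the flux tube $\mathcal{T}_r$ need not have a smooth boundary; the mantle may be irregular. I would handle this by approximating $A_r^\alpha$ from inside by compact subsets on which the flow map $\eta \mapsto \gamma_\eta(s)$ is smooth and $\tau(\eta)$ is continuous (possible because $u \in C^1(D\setminus\{0\})$ and streamlines are uniformly bounded away from the origin on the shell $\overline{B_\alpha}\setminus B_r$, so no stagnation issues arise), applying the divergence theorem on the resulting smooth sub-tubes, and passing to the limit by monotone/dominated convergence to recover the flux balance on all of $A_r^\alpha$. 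Everything else in the argument is a straightforward Cauchy--Schwarz plus one-dimensional integration.
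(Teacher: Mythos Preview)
Your proposal is correct and follows essentially the same route as the paper: transfer the flux from $A_r^\alpha$ to (a subset of) $\partial B_r$ via the divergence theorem on a stream-tube, convert this into the lower bound $\int_{\partial B_r}|u|^2\,d\sigma \gtrsim 1/r$, and integrate in $r$; the paper uses Jensen's inequality on the full sphere where you use Cauchy--Schwarz on $E_r$, which is an equivalent step, and the paper's rigorous justification of the flux identity (open-set property of $A_r$, compact exhaustion, finite cover by small disks on which Lemma~2.7 applies) is exactly the approximation you sketch in your final paragraph.
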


\begin{figure}
\centering
\includegraphics[height=80mm, width=70mm]{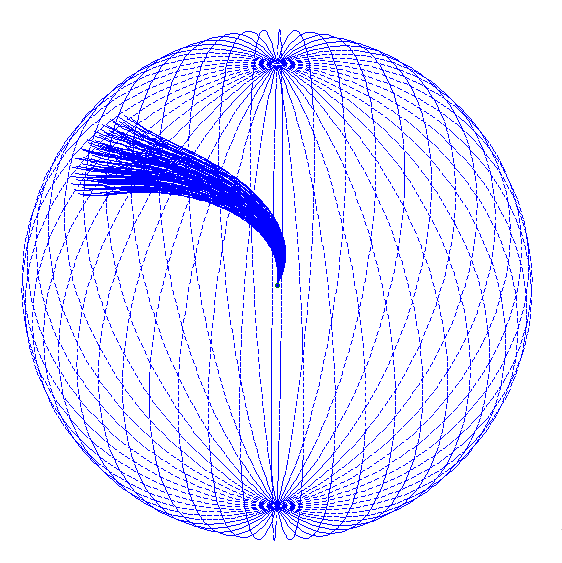}
\caption{The set $A$ of Corollary \ref{shrink}, with streamlines going to the origin}
\label{st_tube}
\end{figure}

The following special case is worth noting.  See Figure \ref{st_tube}.
\begin{corollary}\label{shrink}Suppose for some $\alpha>0$ and for $A \subset \partial B_{\alpha}$ that $\int_A u\cdot \hat{n}d\sigma \neq 0$ and $A_r^{\alpha} \supset A$ for $0<r<\alpha$.  Then $u \notin L^2(\mathbb{R}^3)$.
\end{corollary}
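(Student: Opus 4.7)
The plan is to reduce Corollary \ref{shrink} directly to Theorem \ref{MainThm}. Since $A \subset A_r^\alpha$ but the inclusion can be strict, a naive monotonicity of the integral does not suffice on its own; what makes the reduction work is a \emph{sign rigidity} observation: on the entire set $A_r^\alpha$ one has $u \cdot \hat{n} \leq 0$, so enlarging the domain of integration from $A$ to $A_r^\alpha$ can only push the flux further from zero, never back towards it.

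To justify the sign claim I would use the geometric content of the definition of $A_r^\alpha$. For $\eta \in A_r^\alpha$ the streamline satisfies $|\gamma_\eta(0)| = \alpha$ and $|\gamma_\eta(s')| < \alpha$ for $s' \in (0, s)$, so the right derivative of $s' \mapsto |\gamma_\eta(s')|^2$ at $0$ is non-positive. Since this derivative equals $2\eta \cdot u(\eta) = 2\alpha\, u(\eta)\cdot\hat{n}(\eta)$, one obtains $u \cdot \hat{n} \leq 0$ throughout $A_r^\alpha$.

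Combining this sign information with the containment $A \subset A_r^\alpha$ gives
$$\int_{A_r^\alpha} u \cdot \hat{n}\, d\sigma \;=\; \int_A u\cdot\hat{n}\, d\sigma + \int_{A_r^\alpha \setminus A} u\cdot\hat{n}\, d\sigma \;\leq\; \int_A u\cdot\hat{n}\, d\sigma,$$
where the rightmost integral is non-positive by the sign claim and nonzero by hypothesis, hence strictly negative. Setting $c := \left|\int_A u \cdot \hat{n}\, d\sigma\right| > 0$ therefore yields $\left|\int_{A_r^\alpha} u \cdot \hat{n}\, d\sigma\right| \geq c$ for every $0 < r < \alpha$, which a fortiori dominates $c\, r^{1/2}$ for $r \leq 1$. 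Theorem \ref{MainThm} then delivers $u \notin L^2(\mathbb{R}^3)$. I do not anticipate any serious obstacle; the only point requiring genuine care is the sign of $u \cdot \hat{n}$ on $A_r^\alpha$, and that is essentially built into the definition of $A_r^\alpha$.
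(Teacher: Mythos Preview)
Your argument is correct and follows essentially the same route as the paper: both observe that $u\cdot\hat{n}\le 0$ on $A_r^\alpha$ (the paper simply asserts this ``follows from the definition,'' while you spell out the derivative-of-$|\gamma_\eta|^2$ computation), use $A\subset A_r^\alpha$ to get $\bigl|\int_{A_r^\alpha} u\cdot\hat{n}\,d\sigma\bigr|\ge \bigl|\int_A u\cdot\hat{n}\,d\sigma\bigr|=:C>0\ge Cr^{1/2}$ for $0<r<\min\{1,\alpha\}$, and then invoke Theorem~\ref{MainThm}.
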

\begin{proof}
It follows from the definition of $A_r^{\alpha}$ that $u\cdot\hat{n}$ has constant (negative) sign on $A_r^{\alpha}$.  Let $C = |\int_A u\cdot \hat{n}d\sigma| >0$, then for $0<|r| < \min \{1,\alpha\}$, $|\int_{A_r^{\alpha}}u\cdot\hat{n}d\sigma| \geq |\int_A u\cdot\hat{n}d\sigma| \geq Cr^{1/2}$.
\end{proof}

The proof of Theorem \ref{MainThm} proceeds in a few steps.  First of all suppose that
\begin{equation*}
\int_{\partial B_r} |u\cdot \hat{n}|d\sigma \geq \left|\int_{A_r^{\alpha}} u\cdot \hat{n}d\sigma \right|
\end{equation*}
for each $r$ (this is proved in a moment).  Then, Jensen's inequality gives
\begin{equation}\label{Jensen1}
\frac{1}{|\partial B_r|} \int_{\partial B_r} |u|^2d\sigma \geq \left ( \frac{1}{|\partial B_r|}\int_{\partial B_r} |u|d\sigma \right ) ^2
\end{equation}
or
\begin{equation}\label{Jensen2}
\int_{\partial B_r}|u|^2 d\sigma \geq \left (\frac{1}{|\partial B_r|}\int_{\partial B_r} |u| d\sigma \right) ^2
\end{equation}
and by assumption
\begin{equation*}
\left (\frac{1}{|\partial B_r|}\int_{\partial B_r} |u| d\sigma \right) ^2 \geq \frac{1}{4\pi r^2}\left| \int_{A_r^{\alpha}} u\cdot \hat{n}d\sigma \right | ^2 \geq \frac{1}{4 \pi r^2} Cr = \frac{C}{4 \pi r}
\end{equation*}
from which it follows that
\begin{equation*}
\|u \|_{L^2} \geq \left( \int_0^{\epsilon} \int_{\partial B_r}|u|^2 d\sigma dr \right ) ^{1/2} \geq \left ( \int_0^{\epsilon} \frac {C}{4 \pi r} \right) ^{1/2} = \infty
\end{equation*}
where $\epsilon>0$ is such that $|\int_{A_r^{\alpha}} u\cdot \hat{n} d\sigma| \geq C r^{1/2}$ for $0<r\leq \epsilon$.\\

Now, to prove that $\int_{\partial B_r} |u\cdot \hat{n}|d\sigma \geq \left | \int_{A_r^{\alpha}} u\cdot \hat{n}d\sigma \right |$ observe first of all that $\int_{A_r^{\alpha}} u\cdot \hat{n}d\sigma = \int_{\reg A_r^{\alpha}} u\cdot \hat{n}d\sigma$ where $\reg A_r^{\alpha} = \{\eta \in A_r^{\alpha} : (u\cdot \hat{n})(\eta)\neq 0\}$.  Since $\alpha$ is fixed, let $A_r$ denote $\reg A_r^{\alpha}$.  From the definition of $A_r^{\alpha}$ it follows that $(u\cdot\hat{n})(\eta)<0$ for $\eta \in A_r$.\\

\begin{lemma}
\label{Lemma}Let $D\subset \partial B_{\alpha}$ have piecewise smooth boundary and $(u\cdot\hat{n})(\eta)<0$ for $\eta \in D$.  Suppose that $S^D(s) \subset B_r$ for some $s>0$ and that $S^D(s') \subset B_{\alpha}$ for $0<s'\leq s$.  Then
\begin{equation*}
\int_D u\cdot \hat{n}d\sigma = \int_{D^*} u\cdot \hat{n}d\sigma
\end{equation*}
where $D^* \equiv T^D(s) \cap \partial B_r$.  Also, if $D_1$ and $D_2$ are two such sets with $D_1 \cap D_2 = \emptyset$, then $D_1^* \cap D_2^* = \emptyset$.
\end{lemma}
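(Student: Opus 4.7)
The plan is to apply the divergence theorem to the region $V := T^D(s) \cap (B_\alpha \setminus \overline{B_r})$, i.e.\ the piece of the flux tube trapped between the two spheres. Since $\divg u \equiv 0$, the total boundary flux out of $V$ vanishes, and the crucial input is that $u$ is everywhere tangent to the mantle $\partial T^D(s)$ (because the mantle is swept out by streamlines), so the lateral contribution drops out automatically.

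More concretely, I would decompose $\partial V$ into three pieces: (i) the starting surface $D \subset \partial B_\alpha$, along which the outward unit normal from $V$ is $+\hat{n}$; (ii) the set $D^{*} = T^{D}(s) \cap \partial B_{r}$, along which the outward unit normal from $V$ is $-\hat{n}$ (pointing into $B_r$); and (iii) the portion of the mantle lying in $\overline{B_\alpha} \setminus B_r$. Note that the terminal surface $S^D(s)$ does \emph{not} appear in $\partial V$, because by hypothesis it lies strictly inside $B_r$. Applying the divergence theorem and discarding the mantle integral then gives
\begin{equation*}
0 \;=\; \int_{D} u \cdot \hat{n}\, d\sigma \;-\; \int_{D^{*}} u \cdot \hat{n}\, d\sigma,
\end{equation*}
which is the identity claimed.

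For the disjointness claim, I would argue by contradiction using uniqueness for \eqref{ODE}. Suppose $x \in D_1^{*} \cap D_2^{*}$. Then for $i=1,2$ there exist $\eta_i \in D_i$ and $s_i \in (0,s]$ with $\gamma_{\eta_i}(s_i) = x$. Uniqueness of solutions to the autonomous ODE forces $\gamma_{\eta_1}$ and $\gamma_{\eta_2}$ to trace the same orbit, so (say with $s_1 \geq s_2$) $\eta_2 = \gamma_{\eta_1}(s_1 - s_2)$. If $s_1 > s_2$, then $s_1 - s_2 \in (0,s]$, and the hypothesis $S^{D_1}(s') \subset B_\alpha$ (open ball) for $0 < s' \leq s$ places $\eta_2$ in the interior of $B_\alpha$, contradicting $\eta_2 \in \partial B_\alpha$. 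Hence $s_1 = s_2$ and $\eta_1 = \eta_2$, contradicting $D_1 \cap D_2 = \emptyset$.

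The step I expect to be most delicate is the justification of the divergence theorem on $V$: its boundary is only as smooth as the flow and $\partial D$ allow, and $D^{*}$ may have several connected components if the tube enters and exits $B_r$ repeatedly. I would handle this by invoking a version of the divergence theorem for domains with piecewise Lipschitz boundary, using transversality of $\partial B_r$ to the mantle (arranged, if necessary, by replacing $r$ by a nearby regular value of $|\cdot|$ restricted to the mantle) to ensure that $D^{*}$ is bounded by a piecewise smooth curve inside $\partial B_r$ and that $V$ is a legitimate domain of integration.
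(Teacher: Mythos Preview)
Your proposal is correct and follows essentially the same route as the paper: apply the divergence theorem to the portion of the flux tube lying in the annulus $\{r<|x|<\alpha\}$, observe that $u$ is tangent to the mantle so only the caps $D$ and $D^*$ contribute, and deduce the flux identity; the disjointness of $D_1^*$ and $D_2^*$ is obtained from uniqueness of solutions to \eqref{ODE}. Your write-up is in fact somewhat more explicit than the paper's, particularly in tracking the orientations $\pm\hat n$ on the two caps, in spelling out the time-shift argument for disjointness, and in flagging the transversality issue needed to ensure $\partial V$ is piecewise smooth.
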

\begin{proof}
The function $\gamma_{\eta}:D \times [0,s] \rightarrow T^D(s)$ is onto and it follows from the theory of ordinary differential equations and from $u \in C^1$ that $\gamma_{\eta} \in C^1$.  Also, $\gamma_{\eta}$ is injective, which follows from uniqueness of solutions and from the fact that for each $\eta \in D$, $\gamma_{\eta}(s) \notin D$ for $s>0$.  From these properties it can be shown that $\partial T^D(s) = D \cup S^D(s) \cup T^{\partial D}(s)$.  Piecewise smoothness of $\partial T^D(s)$ then follows from the piecewise smoothness of $\partial D$ and smoothness of solutions to the vector field.  Let $T = \{x \in T^D(s):r<|x|<\alpha\}$ and let $V = \{x \in T^{\partial D}(s): r<|x|<\alpha\}$, and let $D^*$ be as defined above.  Note that $T$ has piecewise smooth boundary since it is the intersection of two sets with piecewise smooth boundary.  Write $\partial T = D \cup D^* \cup V$.  If $x \in V$ then a part of the streamline through $x$ lies in $V$, therefore $u(x)$ is in the tangent space of $V$ at $x$.  Then, applying the divergence theorem and using $\divg u \equiv 0$ gives the stated result.  Observe that the implication $D_1\cap D_2 = \emptyset \Rightarrow D_1^*\cap D_2^* = \emptyset$ follows from the uniqueness of solutions in the same way as above.
\end{proof}

\begin{claim}$A_r$ is open.  Moreover, for each $\eta \in A_r$ there is a $\delta>0$ such that $D\equiv \{\xi \in \partial B_{\alpha}: |\xi-\eta|<\delta\}$ satisfies the assumptions of the above lemma.
\end{claim}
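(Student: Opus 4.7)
Fix $\eta \in A_r$. By the definitions of $A_r^\alpha$ and $A_r=\reg A_r^\alpha$, there is $s_\eta>0$ with $\gamma_\eta(s_\eta)\in B_r$, with $\gamma_\eta(s')\in B_\alpha$ for every $0<s'<s_\eta$, and with $(u\cdot\hat n)(\eta)<0$. The plan is to split the streamline into a short initial arc $s'\in[0,\epsilon]$, where $\gamma_\eta$ is close to $\partial B_\alpha$ but leaves inward, and a main arc $s'\in[\epsilon,s_\eta]$, which is a compact curve strictly inside $B_\alpha$ and bounded away from the origin. The first arc is controlled by the sign of $u\cdot\hat n$, the second by continuous dependence of solutions to the ODE \eqref{ODE} on initial data.

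Since $u\in C^1(D\setminus\{0\},\mathbb{R}^3)$ and $(u\cdot\hat n)(\eta)<0$, choose a neighborhood $U\subset\partial B_\alpha$ of $\eta$ and a constant $\kappa>0$ with $(u\cdot\hat n)\le-\kappa$ on $U$. For $\xi\in U$ one has $\frac{d}{ds'}|\gamma_\xi(s')|^2=2\gamma_\xi(s')\cdot u(\gamma_\xi(s'))$, which at $s'=0$ equals $2\alpha(u\cdot\hat n)(\xi)\le-2\alpha\kappa$. By continuity of this expression in $(\xi,s')$ in a compact neighborhood of $(\eta,0)$ staying away from the origin, there exist $\epsilon>0$ and a smaller neighborhood $U'\subset U$ of $\eta$ such that this derivative remains below $-\alpha\kappa$ on $U'\times[0,\epsilon]$. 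Integrating, $|\gamma_\xi(s')|^2\le\alpha^2-\alpha\kappa s'<\alpha^2$ for all $\xi\in U'$ and $s'\in(0,\epsilon]$, so the initial arc of every such $\gamma_\xi$ stays in $B_\alpha$.

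Now consider the main arc. The compact set $K=\gamma_\eta([\epsilon,s_\eta])$ lies in the open ball $B_\alpha$, with $\gamma_\eta(s_\eta)\in B_r$, and also stays at positive distance from the origin (since $s'>0$). Let $M=\max_{s'\in[\epsilon,s_\eta]}|\gamma_\eta(s')|<\alpha$. Standard continuous dependence of ODE solutions on initial data, applied to \eqref{ODE} on the compact interval $[\epsilon,s_\eta]$ using local $C^1$ regularity of $u$ near $K$, yields a further neighborhood $U''\subset U'$ of $\eta$ such that for every $\xi\in U''$, $\sup_{s'\in[\epsilon,s_\eta]}|\gamma_\xi(s')-\gamma_\eta(s')|<\min\{\alpha-M,\,r-|\gamma_\eta(s_\eta)|\}$. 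This forces $\gamma_\xi(s')\in B_\alpha$ on $[\epsilon,s_\eta]$ and $\gamma_\xi(s_\eta)\in B_r$. Combining with the previous paragraph shows $\gamma_\xi(s')\in B_\alpha$ for all $0<s'<s_\eta$ and $\gamma_\xi(s_\eta)\in B_r$, hence $\xi\in A_r^\alpha$; and $U''\subset U$ gives $(u\cdot\hat n)(\xi)<0$, so $\xi\in A_r$. Therefore $A_r$ is open.

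For the second assertion, pick $\delta>0$ small enough that the spherical cap $D=\{\xi\in\partial B_\alpha:|\xi-\eta|<\delta\}$ is contained in $U''$. Then $D$ has smooth (hence piecewise smooth) boundary, $(u\cdot\hat n)<0$ on $D$ by construction, $S^D(s_\eta)\subset B_r$ by the endpoint bound in the previous paragraph, and $S^D(s')\subset B_\alpha$ for $0<s'\le s_\eta$ by the two-interval decomposition. Hence $D$ satisfies the hypotheses of Lemma~\ref{Lemma} with $s=s_\eta$. The main subtlety of the argument is the initial arc: continuous dependence alone does not prevent nearby streamlines from escaping $B_\alpha$ just after leaving $\partial B_\alpha$, and the strict inward condition $(u\cdot\hat n)(\eta)<0$ inherited from $\eta\in A_r$ (as opposed to $\eta$ merely belonging to $A_r^\alpha$) is essential for the small-time estimate.
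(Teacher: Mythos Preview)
Your proof is correct and follows essentially the same two-stage strategy as the paper's: control a short initial arc near $\partial B_\alpha$ via the strict inward condition $(u\cdot\hat n)(\eta)<0$ (you integrate the differential inequality for $|\gamma_\xi|^2$; the paper instead uses negativity of $u\cdot\hat n$ in a full $3$-dimensional neighborhood of $\eta$), and then control the remaining compact arc strictly inside $B_\alpha$ by continuous dependence on initial data. One cosmetic remark: the parenthetical ``(since $s'>0$)'' is not the right reason $K$ avoids the origin---the correct justification is simply that $\gamma_\eta$ is defined on $[0,s_\eta]$ and hence remains in the domain $D\setminus\{0\}$ of the $C^1$ field $u$.
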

\begin{proof}
Let $\eta \in A_r$ and $s$ be as in the definition of $A_r^\alpha$. Then $(u\cdot \hat{n})(\eta) <0$.  By continuity there exists $\delta>0$ so that $E \equiv \{\xi \in \partial B_{\alpha}:|\xi-\eta|\leq \delta\}$ has $(u\cdot\hat{n})(\lambda)<0$ for $\xi \in E$.  $E$ is compact, and by a property of compact sets, there exists $\alpha>0$ so that $\mbox{dist}(\xi, E)<\alpha$ implies $(u\cdot\hat{n})(\xi)<0$.  Let $t = \inf\{s'>0: |\gamma_{\eta}(s') - \eta|>\alpha/2\}$ and let $\beta(s)= \inf\{|\gamma_{\eta}(s') - \partial B_{\alpha}|: t \leq s' \leq s\}$.  Observe that $\beta>0$ since the sets $\{\gamma_{\eta}(s'): t\leq s' \leq s\}$ and $\partial B_{\alpha}$ are compact and disjoint.  Let $\beta'>0$ be such that $|\xi-\gamma_{\eta}(s)|<\beta'$ implies $\xi \in B_r$.  Let $\alpha' = \min\{\alpha/2,\beta,\beta'\}$.  By continuous dependence on initial data, there is a $\delta'>0$, $\delta'\leq \delta$ so that $|\xi-\eta|<\delta'$ implies $|\gamma_{\xi}(s')-\gamma_{\eta}(s')|<\alpha'$ for $0\leq s' \leq s$.  For these $\xi$, $|\gamma_{\xi}(s') - E|<\alpha$ for $0\leq s' \leq t$ and so $(u\cdot \hat{n})(\gamma_{\xi}(s'))<0$ for $ 0\leq s' \leq t$, from which it follows that $\gamma_{\xi}(s') \in B_{\alpha}$ for $0< s' \leq t$.  Then, $|\gamma_{\xi}(s)-\gamma_{\eta}(s)|<\beta'$ implies $\gamma_{\xi}(s) \in B_r$, and $|\gamma_{\xi}(s') - \gamma_{\eta}(s')|<\beta$ implies $\gamma_{\xi}(s') \in B_{\alpha}$, for $t\leq s' \leq s$.  Therefore $\delta'$ gives $D$ that satisfies the claim.
\end{proof}

{\it End of the proof of Theorem \ref{MainThm}}. Since $A_r$ is open it is Lebesgue measurable.  It follows that for each $\epsilon>0$, by a theorem for measurable sets there exists $K$ closed, $K\subset A_r$ such that $m(A_r \setminus K)<\epsilon$, where $m$ denotes Lebesgue measure.  For each $\eta \in A_r$ let $D_{\eta}$ be as in the above claim, then $\{D_{\eta}\}_{\eta \in K}$ is an open cover of $K$.  Since $K$ is a closed and bounded subset of $\mathbb{R}^3$, it is compact and therefore from the above cover one can take a finite subcover $\{D_{\eta_i}\}_{1\leq i \leq k}$.  Let $E_1 = D_{\eta_1}$ and for $2 \leq i \leq k$ let $E_i = D_{\eta_i} \setminus E_{i-1}$; then the $E_i$ are pairwise disjoint and have piecewise smooth boundary, and $\bigcup_{i=1}^k E_i$ covers $K$.  For each $i$ let $E_i^* = T^{E_i}(s) \cap \partial B_r$.  Then
\begin{equation*}
\int_{\bigcup_{i=1}^k E_i} u\cdot \hat{n}d\sigma = \int_{\bigcup_{i=1}^k E_i^*} u\cdot \hat{n}d\sigma
\end{equation*}
using $\int_{E_i}u\cdot\hat{n}d\sigma = \int_{E_i^*}u\cdot \hat{n}d\sigma$ (from Lemma \ref{Lemma}) for each $i$ and $E_i \cap E_j = \emptyset$ implies that $ E_i^* \cap E_j^* = \emptyset$.  Since $\bigcup_{i=1}^k E_i^* \subset \partial B_r$ and $m(A_r \setminus \bigcup_{i=1}^k E_i)\leq m(A_r \setminus K)<\epsilon$ it follows that
\begin{equation*}
\int_{\partial B_r} |u\cdot \hat{n}d\sigma| \geq \left|\int_{A_r}u\cdot \hat{n}d\sigma\right| - \epsilon \|u\|_{L^{\infty}(\partial B_{\alpha})}
\end{equation*}
Since $u \in C^{1}(D\setminus \{0\},\R^3)$ by assumption then $\|u\|_{L^\infty(\partial B_{\alpha})}<\infty$. Moreover, since $\epsilon>0$ is arbitrary we have
\begin{equation*}
\int_{\partial B_r} |u\cdot \hat{n}d\sigma| \geq \left|\int_{A_r}u\cdot\hat{n}d\sigma \right| = \left|\int_{A_r^{\alpha}}u\cdot\hat{n}d\sigma \right|
\end{equation*}
as claimed.

\begin{remark}
\begin{itemize}
\item Note that condition $|\int_{A_r^{\alpha}} u\cdot\hat{n}d\sigma| \geq Cr^{1/2}$ in the theorem implicitly requires that the Lebesgue measure of the set $A^\alpha_r$ is non zero for some $\alpha>0$ and any $0<r<\alpha$. The example of a rotating vector field $u(x)=\frac{(x_2,-x_1,0)}{|x|^\gamma}$ shows that for any $\alpha>0$, and for any $r<\alpha$ the set $A^\alpha_r$ is empty. Moreover, this example shows that the vector field $u$ can be in $L^2$ as well as not in $L^2$ depending whether or not $\gamma<4$ or $\gamma>4$.
\item 
We can easily generalize the main theorem (Theorem \ref{MainThm}) to
$L^p$ spaces ($1\leq p\leq \infty$). In fact, we just use H\"older
inequality instead of Jensen's inequality which is used in
\eqref{Jensen1} and \eqref{Jensen2}. More precisely we have the
following statement:

 If for some $\alpha>0$ and for some $C>0$
independent of $r$, $|\int_{A_r^{\alpha}} u\cdot\hat{n}d\sigma| \geq
Cr^{2(1-1/p)}$ as $r \rightarrow 0$, then $u \notin
L^p(\mathbb{R}^3)$.
\end{itemize}
\end{remark}

{\bf Acknowledgments.}
The third author  thanks  the Pacific Institute for the Mathematical Sciences
 for support of his presence there during the academic
year 2010/2011.
This paper developed during a stay of the third author as a PostDoc at the Department of Mathematics and Statistics,
University of Victoria.

\end{document}